\documentclass{amsart}

\usepackage[T1]{fontenc}
\usepackage[latin1]{inputenc}
\usepackage{amsmath}
\usepackage{amssymb}
\usepackage{amsthm}
\usepackage{dsfont}
\usepackage{color}

\theoremstyle{plain}\newtheorem{theo}{Theorem}
\theoremstyle{plain}
\theoremstyle{definition}
\theoremstyle{plain}\newtheorem{defi}{Definition}[section]
\theoremstyle{plain}\newtheorem{lem}[defi]{Lemma}
\theoremstyle{plain}\newtheorem{prop}[defi]{Proposition}
\theoremstyle{definition}

\newcommand{\N}{{\mathds N}}
\newcommand{\R}{{\mathds R}}
\newcommand{\Z}{{\mathds Z}}
\DeclareMathOperator{\var}{Var}

\DeclareMathOperator{\sgn}{sgn}

\begin{document}

\title[Empirical Process of Random Walk in Random Scenery]{The Sequential Empirical Process of a Random Walk in Random Scenery}


\author[M. Wendler]{Martin Wendler}
\address{Ernst-Moritz-Arndt-Universit\"at Greifswald, 17487 Greifswald, Germany}
\email{martin.wendler@uni-greifswald.de}
\date{\today}

\keywords{random walk; random scenery; empirical process}

\begin{abstract} A random walk in random scenery $(Y_n)_{n\in\N}$ is given by $Y_n=\xi_{S_n}$ for a random walk $(S_n)_{n\in\N}$ and iid random variables $(\xi_n)_{n\in\Z}$. In this paper, we will show the weak convergence of the sequential empirical process, i.e. the centered and rescaled empirical distribution function. The limit process shows a new type of behavior, combining properties of the limit in the independent case (roughness of the paths) and in the long range dependent case (self-similarity).
\end{abstract}

\subjclass{60G50; 62G30; 60F17; 60G17}

\maketitle

\section{Introduction} For a stationary, real valued sequence $(Y_n)_{n\in\N}$ of random variables with marginal distribution function $F$, the empirical distribution function $F_n$ is defined by
\begin{equation}
F_n(t)=\frac{1}{n}\sum_{i=1}^n\mathds{1}_{\{Y_i\leq t\}}.
\end{equation}
If the marginal distribution function $F$ is continuous, we can without loss of generality assume that $F(t)=t$ (otherwise replacing $Y_n$ by $F(Y_n)$). The sequential empirical process is a two-parameter stochastic process $\big(W_n(s,t)\big)_{s,t\in[0,1]}$ defined by
\begin{equation}
W_n(s,t)=\sum_{i=1}^{[ns]}\left(\mathds{1}_{\{Y_i\leq t\}}-t\right),
\end{equation}
where $[x]$ denotes the integer part of $x$. Note that we will have to rescale this process in order to obtain weak convergence, but as we need a different scaling for different kinds of stochastic processes, we have not included the scaling here. For iid (independent and identical distributed) random variables $(Y_n)_{n\in\N}$, Donsker \cite{dons} showed the weak convergence of the (non-sequential) empirical process  $\big(\frac{1}{\sqrt{n}}W_n(1,t)\big)_{t\in[0,1]}$ to a Brownian bridge. This was extended by M\"uller \cite{mull} to the sequential empirical process $\big(\frac{1}{\sqrt{n}}W_n(s,t)\big)_{s,t\in[0,1]}$. The limit Gaussian process is the so called Kiefer-M\"uller process $K$, which is self-similar with exponent $b=\frac{1}{2}$, that means for any $a>0$ the process $\big(K(as,t)\big)_{s,t\in[0,1]}$ has the same distribution as $\big(a^{\frac{1}{2}}K(s,t)\big)_{s,t\in[0,1]}$. For fixed $s\in[0,1]$, $\left(K(s,t)\right)_{t\in[0,1]}$ is a Brownian bridge, while for fixed  $t\in[0,1]$ $\left(K(s,t)\right)_{s\in[0,1]}$ is a Brownian motion. This implies that there is an almost surely continuous modification of $K$, but the paths are not $\gamma$-H\"older continuous for any $\gamma>\frac{1}{2}$.

This limit theorem has been extended to different kinds of short range dependent processes $(Y_n)_{n\in\N}$, where one still needs a $n^{-\frac{1}{2}}$ scaling and the limit process is still self-similar with exponent $\frac{1}{2}$. For example, Berkes and Philipp \cite{berk} studied approximating functionals of strongly mixing sequences and Berkes, H\"ormann, Schauer \cite{ber2} so called $S$-mixing random variables. In the short range dependent case, the limit process is for fixed $t\in[0,1]$ a Brownian motion as in the independent case, so the paths are not smoother.

For long range dependent processes, the limit behavior is different in many aspects. For Gaussian sequences with slowly decaying covariances, Dehling and Taqqu \cite{dehl} showed the convergence of sequential empirical process to a limit process that is self-similar with exponent $b>\frac{1}{2}$ and that is degenerate in the following sense: For fixed $s$, the process is not a Brownian bridge, but a deterministic function multiplied by a random variable. The paths for fixed $s$ might be differentiable. For fixed $t$, the limit process is a fractional Brownian motion which is $\gamma$-H\"older continuous with exponent $\gamma>\frac{1}{2}$. For long range dependent linear processes, analog results were proved by Ho and Hsing \cite{ho}.

In this paper, we will consider the random walk in random scenery, which is often considered to be another model for a long range dependent sequence of random variables. Let $(S_n)_{n\in\N}$ with $S_{n}=\sum_{i=1}^nX_i$ be a random walk in the normal domain of attraction of an $\alpha$-stable L\'evy process (with iid, integer valued increments $(X_n)_{n\in\N}$) and $(\xi_n)_{n\in\Z}$ a sequence of iid random variables (called scenery). Then the stationary process $(Y_n)_{n\in\N}$ with $Y_n=\xi_{S_n}$ is called random walk in random scenery and was first investigated by Kesten and Spitzer \cite{kest} and Borodin \cite{boro}.

The behavior of partial sum process $Z_n$ with $Z_n(s)=\sum_{i=1}^{[ns]} Y_i$ has been studied extensively. It converges weakly to a self-similar process with exponent $b>\frac{1}{2}$, which has smooth paths even if the random variables $(\xi_n)_{n\in\Z}$ are in the domain of attraction of a L\'evy process with jumps, see \cite{kest}. Other results include the law of the iterated logarithm (Khoshnevisan and Lewis \cite{khos}), large deviations (Gantert, K\"onig, and Shi \cite{gant}), extremes (Franke and Saigo \cite{frank}) and $U$-statistics (Guillotin-Plantard and Ladret \cite{guil}, Franke, P\`ene, and Wendler \cite{fran2}). As far as we know, there are no results on the empirical process of a random walk in random scenery.

\section{Main Results}

We will now give a functional non-central limit theorem for the sequential empirical process of a random walk in random scenery, that means the two-parameter process $W_n$ with
\begin{equation}
W_n(s,t)=\sum_{i=1}^{[ns]}\left(\mathds{1}_{\{Y_i\leq t\}}-t\right),\ \ \text{where} \ \ Y_n=\xi_{S_n} \ \  \text{and} \ \ S_n:=\sum_{i=1}^nX_i.
\end{equation}
Let us first introduce the limit process $W$: Let $K=\left(K(x,t)\right)_{x\in\R, t\in[0,1]}$ be a two-sided Kiefer-M\"uller process, which is defined as follows: $K_1=\left(K_1(x,t)\right)_{x\in[0,\infty), t\in[0,1]}$ and $K_{-1}$ be two independent, centered, continuous, two-parameter Gaussian process with covariance
\begin{equation}
E\left[K_i(x,t)K_i(x',t')\right]=\min\{x,x'\}\left(\min\{t,t'\}-tt'\right) \ \ \ \text{for}\ \ \ i=1,-1.
\end{equation}
Set $K(x,t)=K_{\sgn(x)}(|x|,t )$. Furthermore, let $(L_s(x))_{s\geq0}$ be the local time of the limit process $(S_s^\star)_{s\geq0}$ of the rescaled partial sum $(n^{-\frac{1}{\alpha}}\sum_{i=1}^{[ns]}X_i)_{s\geq0}$, that means
\begin{equation}
\int_0^t\mathds{1}_{[a,b)}(S_s^\star)ds=\int_a^bL_t(x)dx.
\end{equation}
For the existence of such a continuous time, see Getoor and Kesten \cite{geto}. Now the limit process $W$ can be described by the following stochastic integral with respect to the (two-sided) Brownian motion $(K(x,t))_{x\in\R}$
\begin{equation}
 W(s,t):=\int_{\R} L_s(x)dK(x,t).
\end{equation}
We will investigate the properties of this process after our main Theorem.
\begin{theo}\label{theo1} Let $(\xi_n)_{n\in\Z}$ be an iid sequence of random variables uniformly distributed on $[0,1]$. If $(X_n)_{n\in\N}$ is another iid sequence, independent of $(\xi_n)_{n\in\Z}$, integer valued and the law of $X_n$ is in the normal domain of attraction of an $\alpha$-stable law $F_\alpha$ with $1<\alpha\leq 2$, then we have the weak convergence
\begin{equation}
n^{-1+\frac{1}{2\alpha}}W_n\Rightarrow W
\end{equation}
in the space $D\left([0,1]^2\right)$.
\end{theo}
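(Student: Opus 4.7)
The plan is to exploit the standard decomposition of a random walk in random scenery into a sum over sites. Writing $N_n(x):=\#\{1\leq i\leq n:S_i=x\}$ for the occupation number of site $x$ up to time $n$, one has
\begin{equation*}
W_n(s,t)=\sum_{x\in\Z}N_{[ns]}(x)\bigl(\mathds{1}_{\{\xi_x\leq t\}}-t\bigr).
\end{equation*}
Conditionally on $(X_n)_{n\in\N}$, the processes $\eta_x(t):=\mathds{1}_{\{\xi_x\leq t\}}-t$ are iid in $x$, centered, and of Brownian-bridge covariance $\min(t,t')-tt'$, so the problem becomes a conditional weighted empirical CLT whose weights $N_{[ns]}(\cdot)$ are independent of the scenery. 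The candidate limit $W(s,t)=\int_\R L_s(x)\,dK(x,t)$ is, conditionally on $L$, a centered Gaussian process with covariance $\bigl(\int_\R L_s(x)L_{s'}(x)\,dx\bigr)\bigl(\min(t,t')-tt'\bigr)$, which is the right target.

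For finite-dimensional convergence, fix points $(s_j,t_j)_{j=1}^k$ and observe that the conditional covariance of $n^{-1+\frac{1}{2\alpha}}\bigl(W_n(s_j,t_j)\bigr)_j$ equals
\begin{equation*}
n^{-2+\frac{1}{\alpha}}\sum_{x\in\Z}N_{[ns_j]}(x)N_{[ns_\ell]}(x)\bigl(\min(t_j,t_\ell)-t_jt_\ell\bigr).
\end{equation*}
The joint functional convergence $n^{-1+\frac{1}{\alpha}}N_{[ns]}(\lfloor n^{\frac{1}{\alpha}}y\rfloor)\Rightarrow L_s(y)$ (see Kesten-Spitzer \cite{kest} and Borodin \cite{boro}) gives, by a Riemann-sum argument, convergence of the rescaled self-intersection sum to $\int_\R L_{s_j}(x)L_{s_\ell}(x)\,dx$, matching the covariance of $W$. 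Since $|\eta_x|\leq 1$ and $\max_x N_{[ns]}(x)=o(n^{1-\frac{1}{2\alpha}})$ almost surely, the conditional Lindeberg condition holds, so a conditional CLT yields conditional Gaussianity at these points; combined with the independence of walk and scenery, this lifts to unconditional f.d.d.\ convergence to $\bigl(W(s_j,t_j)\bigr)_j$.

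Tightness on $D([0,1]^2)$ is obtained from moment bounds in each parameter. In the $t$-direction, the rescaled increment has conditional fourth moment at most $C(t'-t)^2\bigl(\sum_x N_{[ns]}(x)^2\bigr)^2$, and $E\bigl[\bigl(\sum_x N_{[ns]}(x)^2\bigr)^2\bigr]\leq Cn^{4-\frac{2}{\alpha}}$, yielding a bound of $C(t'-t)^2$ after rescaling. In the $s$-direction, the conditional second moment given the scenery is $t(1-t)\sum_x\bigl(N_{[ns']}(x)-N_{[ns]}(x)\bigr)^2$, whose expectation is controlled by the self-intersection estimate $\leq C(n|s'-s|)^{2-\frac{1}{\alpha}}$, giving $C|s'-s|^{2-\frac{1}{\alpha}}$ after rescaling (usable since $\alpha>1$). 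A combined bound on rectangle increments then feeds a Bickel-Wichura-type criterion on $D([0,1]^2)$.

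The main obstacle is producing the joint rectangle moment estimate: the criterion on $D([0,1]^2)$ requires a single inequality of the form $E\bigl[|\Delta_{s,s'}\Delta_{t,t'}W_n|^p\bigr]\leq C|s-s'|^a|t-t'|^b$ with $a+b>1$, obtained by balancing the scenery cancellations (factor $(t-t')^{1/2}$) against the local-time fluctuations (factor $|s-s'|^{(2-1/\alpha)/2}$), which couple through the random weights $N_{[ns]}(x)$. A secondary subtlety is promoting the distributional convergence of the self-intersection local time to joint convergence with the rescaled scenery partial sums, which follows from the independence of the walk and the scenery together with a Skorokhod representation argument.
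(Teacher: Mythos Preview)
Your overall architecture coincides with the paper's: write $W_n(s,t)=\sum_x N_{[ns]}(x)\eta_x(t)$, prove f.d.d.\ convergence via a conditional CLT whose conditional covariance is governed by $n^{-2+1/\alpha}\sum_x N_{[ns_i]}(x)N_{[ns_j]}(x)$, and then verify a Bickel--Wichura moment criterion for tightness. For the f.d.d.\ part your sketch is correct in spirit; note only that the ``Riemann-sum argument'' for the convergence of the self-intersection sums to $\int L_{s_i}L_{s_j}\,dx$ is exactly the content of the paper's Lemma~\ref{lem1} and is not entirely soft --- it needs the Kesten--Spitzer estimate $E(N_n(x)-N_n(y))^2\le Cn^{1-1/\alpha}|x-y|^{\alpha-1}$ to control the discretisation error. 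Also, ``conditional second moment given the scenery'' should read ``given the walk''.

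The genuine gap is in the tightness step. First, your fourth-moment bound in the $t$-direction is incomplete: for iid centered summands one has
\[
E\Bigl(\sum_x a_x(\eta_x(t')-\eta_x(t))\Bigr)^4
= E[(\eta(t')-\eta(t))^4]\sum_x a_x^4 + 3\bigl(E[(\eta(t')-\eta(t))^2]\bigr)^2\Bigl(\sum_x a_x^2\Bigr)^2,
\]
and since $E[(\eta(t')-\eta(t))^4]\le |t'-t|$ (not $|t'-t|^2$), the term $|t'-t|\sum_x a_x^4$ survives. After taking $a_x=N_{[ns']}(x)-N_{[ns]}(x)$ and averaging over the walk this gives a contribution $C(n_2-n_1)^{4-3/\alpha}|t'-t|$, with exponent $1$ in $|t'-t|$, which does \emph{not} feed the Bickel--Wichura criterion directly. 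The paper resolves this by restricting to a grid $D_n$ on which $|t'-t|\ge n^{-1/\alpha}$, so that the bad term can be absorbed into $C(n_2-n_1)^{4-2/\alpha}|t'-t|^2$, and then controls the off-grid oscillation by the monotonicity of $t\mapsto \sum_i\mathds{1}_{\{\xi_i\le t\}}$ (cf.\ (\ref{line2})--(\ref{line1})). Second, the condition you state, $a+b>1$, is not sufficient: the Bickel--Wichura criterion requires $E|\Delta W(B)|^p\le C[\mu(B)]^\beta$ with $\beta>1$ for a single finite measure $\mu$ on $[0,1]^2$; in the product form $|s'-s|^a|t'-t|^b$ this forces each of $a,b>1$. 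Your separate one-directional bounds therefore do not yet yield the required rectangle estimate, and the missing ingredient is precisely the grid/monotonicity device above together with the direct fourth-moment computation on the \emph{block} increment.
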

The space $D\left([0,1]^2\right)$ is the space of functions from $[0,1]^2$ to $\R$, for which the limit in each quadrant exists and which are continuous in each point coming from the upper right quadrant, equipped with the multidimensional Skorokhod distance (see Bickel and Wichura \cite{bick}). From the definition of $W$, we can see that for fixed $t$, the process $(W(s,t))_{s\in[0,1]}$ is the limit process of the random walk in random scenery as described by Kesten and Spitzer \cite{kest}. It is clear that the process $W$ is self-similar with the same exponent $b=1-\frac{1}{2\alpha}$, that means $(W(as,t))_{s,t\in[0,1]}$ has the same distribution as $(a^{1-\frac{1}{2\alpha}}W(s,t))_{s,t\in[0,1]}$.

On the other hand, for fixed $s$, the process $(W(s,t))_{t\in[0,1]}$ is a mixture of Brownian bridges (or a Brownian bridge with a random variance). So the process $(W(s,t))_{t\in[0,1]}$ has paths with the same properties as a Brownian bridge, and consequently they are $\gamma$-H\"older continuous for all $\gamma<\frac{1}{2}$, but not for any $\gamma>\frac{1}{2}$. In this sense, the limit process combines properties from the independent case (roughness of Kiefer-M\"uller process) and from the long range dependent case (self-similarity of the Dehling-Taqqu type limit process).

To give a deeper insight into the continuity properties of the process $W$, we need a generalization of the Kolmogorov-Chentsov theorem. There are several multidimensional versions of this theorem in the literature, see e.g.  Mittmann and Steinwart \cite{mitt} and the references therein, but they deal with uniform continuity, while our theorem allows for H\"older continuity with different exponents in different directions. The proof is nevertheless completely analogous and is hence omitted.

\begin{prop}\label{prop1} Let $(Z_{t})_{t\in[0,1]^d}$ be a stochastic process such that for some $m\geq 1$, $c_1,\ldots,c_d,\beta_1,\ldots,\beta_d$ and for all $t=(t_1,\ldots,t_d)$, $s=(s_1,\ldots,s_d)$ we have
\begin{equation}
E\left[\left|Z_t-Z_s\right|^m\right]\leq\sum_{i=1}^dc_i\left|t_i-s_i\right|^{d+\beta_i}.
\end{equation}
Then for all $\gamma_1,\ldots,\gamma_d$ with $\gamma_i<\frac{\beta_i}{m}$, there exists a modification $\tilde{Z}$ of $Z$ and an almost surely finite random variable $C_{\gamma_1,\ldots,\gamma_d}$, such that for all $t=(t_1,\ldots,t_d)$, $s=(s_1,\ldots,s_d)$
\begin{equation}
\left|\tilde{Z}_t-\tilde{Z}_s\right|\leq C_{\gamma_1,\ldots,\gamma_d}\sum_{i=1}^d\left|t_i-s_i\right|^{\gamma_i}.
\end{equation}
\end{prop}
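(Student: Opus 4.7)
The plan is to adapt the classical dyadic chaining argument behind Kolmogorov-Chentsov, carried out coordinate by coordinate so that direction-dependent exponents emerge naturally. First I would introduce the dyadic grids $D_n := \{k 2^{-n}: 0 \le k \le 2^n\}^d \subset [0,1]^d$. For each coordinate $i$ and each level $n$, consider the pairs $(s,t) \in D_n \times D_n$ that differ only in the $i$-th coordinate and exactly by $2^{-n}$: there are at most $2^{nd}$ such pairs. For any such pair, Markov's inequality together with the assumption gives
\begin{equation}
P\bigl(|Z_t - Z_s| > 2^{-n\gamma_i}\bigr) \le c_i\, 2^{-n(d + \beta_i - m\gamma_i)}.
\end{equation}
Since $\gamma_i < \beta_i/m$, the exponent $d + \beta_i - m\gamma_i$ strictly exceeds $d$, so summing over all $i$, all such pairs, and all $n$ produces a convergent double series. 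A Borel-Cantelli argument then yields an almost surely finite random $N$ and a constant $C_1$ such that, for every $n \ge N$, every $i$, and every neighbouring pair in $D_n$, one has $|Z_t - Z_s| \le C_1 2^{-n\gamma_i}$.

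Next I would establish the anisotropic Hölder bound on the countable set $D := \bigcup_n D_n$. Given $s, t \in D$, interpolate between them via the $d+1$ corners $s, (t_1, s_2, \ldots, s_d), (t_1, t_2, s_3, \ldots, s_d), \ldots, t$, so that each of the $d$ legs is axis-parallel. On the $i$-th leg, the one-dimensional chaining with dyadic refinements in coordinate $i$ telescopes through a geometric series in $2^{-n\gamma_i}$, yielding an increment of order $|t_i - s_i|^{\gamma_i}$. Summing the $d$ legs gives
\begin{equation}
|Z_t - Z_s| \le C_2 \sum_{i=1}^d |t_i - s_i|^{\gamma_i}
\end{equation}
for all $s, t \in D$, with $C_2$ almost surely finite. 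I would then define $\tilde Z_t$ as the limit of $Z_{t^{(n)}}$ along any dyadic sequence $t^{(n)} \to t$; the bound above shows this limit exists and is independent of the sequence, extending it to all of $[0,1]^d$. Continuity in probability (a consequence of the moment hypothesis) gives $\tilde Z_t = Z_t$ almost surely for each $t$, so $\tilde Z$ is a modification.

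The main technical issue is keeping the direction-dependent exponents cleanly separated in the chaining step: one must ensure that when the path is refined in coordinate $i$, only $\gamma_i$ appears in the corresponding geometric series, without mixing with the other directions. Doing the interpolation one coordinate at a time, as above, sidesteps this by restricting each leg to a one-dimensional axis-parallel subgrid on which the classical Kolmogorov-Chentsov estimate applies verbatim; the anisotropy is then absorbed into the final additive bound rather than into the chaining itself.
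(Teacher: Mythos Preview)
The paper does not actually prove this proposition; it explicitly omits the argument as ``completely analogous'' to the classical Kolmogorov--Chentsov theorem. Your dyadic chaining with coordinate-wise interpolation is exactly such an analogous argument and is correct.
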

While for fixed $s$, the process $(W(s,t))_{t\in[0,1]}$ has the same modulus of continuity, no matter what the properties of the random walk $S_n$ are, it will turn out for higher $\alpha$, the limit process $(W(s,t))_{s\in[0,1]}$ for fixed $t$ is H\"older continuous with a higher exponent $\gamma$.
\begin{prop}\label{prop2} For any $\gamma<1-\frac{1}{2\alpha}$, $\gamma'<\frac{1}{2}$, there is a modification $\tilde{W}$ of $W$ and an almost surely finite random variable $C_{\gamma,\gamma'}$, such that for all $s,t,s',t'\in[0,1]$
\begin{equation}
\left|\tilde{W}(s,t)-\tilde{W}(s',t')\right|\leq C_{\gamma,\gamma'}\left(|s-s'|^\gamma+|t-t'|^{\gamma'}\right).
\end{equation}
\end{prop}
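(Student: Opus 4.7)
The plan is to verify the moment hypothesis of Proposition~\ref{prop1} with $d=2$. Specifically, for each integer $m$ I will show
\[
E\bigl[|W(s,t)-W(s',t')|^m\bigr]\le C_m\bigl(|s-s'|^{m(1-\frac{1}{2\alpha})}+|t-t'|^{m/2}\bigr),
\]
which corresponds to $\beta_1=m(1-\frac{1}{2\alpha})-2$ and $\beta_2=\frac{m}{2}-2$ in Proposition~\ref{prop1}, giving H\"older exponents $\gamma<1-\frac{1}{2\alpha}-\frac{2}{m}$ in $s$ and $\gamma'<\frac{1}{2}-\frac{2}{m}$ in $t$. Taking $m$ sufficiently large then yields any prescribed $\gamma<1-\frac{1}{2\alpha}$ and $\gamma'<\frac{1}{2}$. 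The two-parameter estimate is split along coordinates by writing $W(s,t)-W(s',t')=[W(s,t)-W(s',t)]+[W(s',t)-W(s',t')]$ and applying $|a+b|^m\le 2^{m-1}(|a|^m+|b|^m)$.

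The key observation is that $K$ and $S^\star$ are independent, so conditioning on the local time field $L$ renders the increments of $W$ Gaussian. For fixed $t$, the process $x\mapsto K(x,t)$ is a two-sided Brownian motion with diffusion coefficient $t(1-t)$, hence $W(s,t)-W(s',t)=\int_{\R}(L_s(x)-L_{s'}(x))\,dK(x,t)$ is, conditionally on $L$, centered Gaussian with variance $t(1-t)\int_{\R}(L_s(x)-L_{s'}(x))^2dx$. For fixed $s$ and $t>t'$, the process $x\mapsto K(x,t)-K(x,t')$ is a two-sided Brownian motion with diffusion coefficient $(t-t')-(t-t')^2\le|t-t'|$, so $W(s,t)-W(s,t')$ is, conditionally on $L_s$, centered Gaussian with variance at most $|t-t'|\int_{\R} L_s^2(x)dx$. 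Applying the Gaussian moment identity $E[|Z|^m]=c_m(\var Z)^{m/2}$ conditionally and then taking expectations reduces the problem to bounding $E[(\int(L_s-L_{s'})^2dx)^{m/2}]$ and $E[(\int L_s^2dx)^{m/2}]$.

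These moments are handled by the scaling and strong Markov properties of $S^\star$. Self-similarity $(S^\star_{au})_u\stackrel{d}{=}(a^{1/\alpha}S^\star_u)_u$ implies $L_{au}(x)\stackrel{d}{=}a^{1-1/\alpha}L_u(a^{-1/\alpha}x)$, and a change of variable gives $\int_{\R} L_s^2dx\stackrel{d}{=}s^{2-1/\alpha}\int_{\R} L_1^2dx$; monotonicity of $s\mapsto L_s(x)$ further yields $\int L_s^2dx\le \int L_1^2dx$ almost surely for $s\in[0,1]$. The strong Markov property at time $s'$, combined with the translation invariance of Lebesgue measure, shows $\int_{\R}(L_s(x)-L_{s'}(x))^2dx\stackrel{d}{=}\int_{\R}L_{s-s'}^2(x)dx\stackrel{d}{=}(s-s')^{2-1/\alpha}\int_{\R}L_1^2dx$. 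Since $(2-\frac{1}{\alpha})\frac{m}{2}=m(1-\frac{1}{2\alpha})$, the target inequality follows once $\mu_m:=E[(\int_{\R} L_1^2dx)^{m/2}]$ is finite. The quantity $\int_{\R}L_1^2dx=\int_0^1\int_0^1\delta_0(S^\star_u-S^\star_v)\,du\,dv$ is the self-intersection local time at time $1$ of the one-dimensional $\alpha$-stable process; for $1<\alpha\le 2$ it is classical that it possesses polynomial moments of all orders (derivable from the Kac moment formula and the integrability near zero of the potential density of $S^\star$, and essentially already used in Kesten and Spitzer~\cite{kest}). This finiteness of $\mu_m$ is the main technical input; once granted, the two one-parameter estimates combine via the splitting in paragraph one, and Proposition~\ref{prop1} applied with $m$ chosen so that $\frac{2}{m}<\min\{1-\frac{1}{2\alpha}-\gamma,\,\frac{1}{2}-\gamma'\}$ delivers the modification $\tilde W$ with the asserted H\"older estimate.
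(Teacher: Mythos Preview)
Your proposal is correct and follows the same overall architecture as the paper's proof: split the increment along coordinates, condition on $S^\star$ so that the increments of $W$ become Gaussian, apply the Gaussian moment formula to reduce to moments of $\int (L_s-L_{s'})^2\,dx$ and $\int L_s^2\,dx$, and then invoke Proposition~\ref{prop1} with $m$ large.

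The one genuine difference lies in how the moment bound
\[
E\Bigl[\Bigl(\int_{\R}L_{s-s'}^2(x)\,dx\Bigr)^{m/2}\Bigr]\le C\,|s-s'|^{m(1-\frac{1}{2\alpha})}
\]
is obtained. The paper bounds $\int L_h^2\,dx\le L_h^\star\int L_h\,dx=hL_h^\star$ and then appeals to Davis's inequality $E[(L_h^\star)^p]\le C_p h^{p(\alpha-1)/\alpha}$ for the supremum of local time. You instead use the exact scaling identity $\int L_h^2\,dx\stackrel{d}{=}h^{2-1/\alpha}\int L_1^2\,dx$ and cite finiteness of all moments of the self-intersection local time $\int L_1^2\,dx$. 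Both inputs are standard and yield the same exponent; your route is arguably a bit cleaner because it identifies the scaling exactly rather than recovering it from a product bound, while the paper's route has the advantage of citing a single explicit inequality (Davis) rather than the somewhat more diffuse ``Kac moment formula / Kesten--Spitzer'' justification for $\mu_m<\infty$. Note incidentally that your claimed finiteness of $\mu_m$ can also be read off the paper's estimate, since $\int L_1^2\,dx\le L_1^\star$ and Davis gives all moments of $L_1^\star$.
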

The exponent of H\"older continuity is linked to the exponent of self-similarity $b=1-\frac{1}{2\alpha}$. The same effect is known from fractional Brownian motion (see e.g. the book of Nourdin, \cite{nour}, p. 8).

\section{A Lemma on occupation times}

The occupation time $N_n(x)$ is defined as the number of visits of the random walk $(S_i)_{i=1,\ldots,n}$ to $x$:
\begin{equation}
 N_n(x):=\sum_{i=1}^n\mathds{1}_{\{S_i=x\}}.
\end{equation}
The following Lemma gives a relation to the local time of the limiting process of the random walk, similar to Lemma 6 of Kesten and Spitzer \cite{kest}. In our proofs, $C$ denotes a generic constant which might have different values in different inequalities, but does not depend on $n$.

\begin{lem}\label{lem1} For any $k\in\N$, $s_1,\ldots,s_k\in[0,1]$, the random vector
\begin{equation}
 \left(n^{-2+\frac{1}{\alpha}}\sum_{x\in\Z}N_{[ns_i]}(x)N_{[ns_j]}(x)\right)_{i,j\in\{1,\ldots,k\}}
\end{equation}
converges as $n\rightarrow\infty$ in distribution to
\begin{equation}
 \left(\int L_{s_i}(x)L_{s_j}(x)dx\right)_{i,j\in\{1,\ldots,k\}}.
\end{equation}

\end{lem}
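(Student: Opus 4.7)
The plan is to express the discrete sum as an $L^2$-integral of rescaled occupation times, invoke a joint functional limit theorem for these rescaled objects, and pass to the limit through the continuous mapping theorem applied to the bilinear pairing. Define the rescaled occupation time by
\begin{equation*}
\ell_s^{(n)}(x) := n^{-1+\frac{1}{\alpha}}\, N_{[ns]}\left(\left[n^{\frac{1}{\alpha}}x\right]\right).
\end{equation*}
Using the identity $\sum_{x\in\Z} f(x)g(x) = \int_\R f([y]) g([y])\,dy$ followed by the substitution $y = n^{\frac{1}{\alpha}} x$, one obtains
\begin{equation*}
n^{-2+\frac{1}{\alpha}} \sum_{x\in\Z} N_{[ns_i]}(x) N_{[ns_j]}(x) = \int_\R \ell_{s_i}^{(n)}(x)\, \ell_{s_j}^{(n)}(x)\,dx,
\end{equation*}
so the lemma is reduced to the joint convergence of these $L^2$-pairings to $\int_\R L_{s_i}(x) L_{s_j}(x)\,dx$.

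Next, I would establish a multi-time version of Lemma 6 of Kesten and Spitzer, namely the joint weak convergence
\begin{equation*}
\left(\ell_{s_1}^{(n)},\ldots,\ell_{s_k}^{(n)}\right) \Rightarrow \left(L_{s_1},\ldots,L_{s_k}\right)
\end{equation*}
in a suitable function space, for example uniformly on compacts after passing to a Skorokhod representation. This rests on the joint Skorokhod convergence $(n^{-\frac{1}{\alpha}} S_{[nu]})_{u\in[0,1]} \Rightarrow (S_u^\star)_{u\in[0,1]}$, which is classical, combined with continuity of the occupation measure of a c\`adl\`ag path as a functional of the path. Since Kesten and Spitzer essentially carry out this argument at a single time $s$, only joint tightness across the times $s_1,\ldots,s_k$ needs to be verified.

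To transfer functional convergence to integral convergence I would control the supports. The function $\ell_s^{(n)}$ is supported in $[-(M_n+1), M_n+1]$, where $M_n := n^{-\frac{1}{\alpha}} \max_{i\leq n}|S_i|$, and $(M_n)_n$ is tight because $n^{-\frac{1}{\alpha}} \max_{i\leq n}|S_i| \Rightarrow \sup_{u\leq 1}|S_u^\star|$. Hence for any $\varepsilon>0$ there is a constant $A$ with $P(\mathrm{supp}\, \ell_{s_i}^{(n)} \subset [-A,A] \text{ for all } i) \geq 1-\varepsilon$ uniformly in $n$. On this compact domain the bilinear map $(f,g) \mapsto \int_{-A}^A f(x)g(x)\,dx$ is continuous with respect to uniform convergence, so the continuous mapping theorem delivers the joint convergence claimed in the lemma.

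The main obstacle will be proving the joint functional convergence of $(\ell_{s_i}^{(n)})$ in a topology strong enough to make the $L^2$-pairing continuous, which effectively requires uniform-in-$n$ moment estimates on $\sup_s \int \ell_s^{(n)}(x)^2\,dx$; these can be derived from bounds on return probabilities for random walks in the normal domain of attraction of a stable law, in the same spirit as the single-time estimates in Kesten and Spitzer.
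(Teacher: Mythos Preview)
Your approach is genuinely different from the paper's, and the difference is instructive.

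The paper never proves a functional limit theorem for the rescaled occupation times $\ell_s^{(n)}$. Instead it works only with what Kesten and Spitzer's Lemma~6 actually gives: convergence of \emph{interval integrals} $\int_a^b \ell_s^{(n)}(x)\,dx$. The proof discretises the target $\int L_{s_i}L_{s_j}\,dx$ by a Riemann-type expression $\tau^{-1}\sum_l\big(\int_{\tau l}^{\tau(l+1)}L_{s_i}\big)\big(\int_{\tau l}^{\tau(l+1)}L_{s_j}\big)$, does the same on the discrete side, and then uses moment bounds on $N_n(x)$ and on the increments $N_n(x)-N_n(y)$ (Lemmas~1 and~3 of Kesten--Spitzer) to control the error between the true product sum and its block-averaged version. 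Everything is reduced to occupation-measure convergence plus explicit $L^2$-estimates, and no functional convergence of densities is needed.

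Your route is cleaner conceptually, but the step you flag as the ``main obstacle'' really is the whole difficulty, and the justification you sketch does not close it. Continuity of the occupation \emph{measure} as a functional of the path gives you convergence of $\int_a^b \ell_s^{(n)}(x)\,dx$ for fixed $a<b$ --- exactly the content of Kesten--Spitzer's Lemma~6 --- but not uniform (or even $L^2$-strong) convergence of the densities $\ell_s^{(n)}$ themselves. Weak-$L^2$ convergence of each factor is not enough to pass to the limit in the bilinear form $\int \ell_{s_i}^{(n)}\ell_{s_j}^{(n)}\,dx$: you need one factor to converge strongly, which requires either an equicontinuity estimate for the $\ell_s^{(n)}$ or convergence of the $L^2$-norms. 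The latter is essentially the diagonal case $i=j$ of the lemma itself, so one could bootstrap from Kesten--Spitzer's single-time result via a Skorokhod coupling, but this argument is not in your outline, and the $L^2$-moment bound you mention at the end yields only tightness, not the upgrade from weak to strong convergence. The paper's discretisation scheme sidesteps all of this by never leaving the level of interval integrals.
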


\begin{proof} By the Cram\'er-Wold theorem, it suffices to show that for any $\theta_{ij}\in\R$, $i,j=1,\ldots, k$, we have as $n\rightarrow\infty$ the weak convergence
\begin{equation}
  n^{-2+\frac{1}{\alpha}}\sum_{i,j=1}^k\theta_{ij}\sum_{x\in\Z}N_{[ns_i]}(x)N_{[ns_j]}(x)\Rightarrow \sum_{i,j=1}^k\theta_{ij}\int L_{s_i}(x)L_{s_j}(x)dx.
\end{equation}
In order to show this, we will split the sum on the left side into several parts. Let $\tau>0$, $M>0$, $a(l,n)=\tau ln^{1/\alpha}$ and define
\begin{align}
Q(l,n)&:=n^{-2}\sum_{i,j=1}^k\theta_{ij}\sum_{a(l,n)\leq x,y<a(l+1,n)}N_{[ns_i]}(x)N_{[ns_j]}(y)\\
V(\tau,M,n)&:=\tau^{-1}\sum_{l=-M}^M Q(l,n)\\
U(\tau,M,n)&:=n^{-2+\frac{1}{\alpha}}\sum_{|x|>M\tau n^{1/\alpha}}\sum_{i,j=1}^k\theta_{ij}N_{[ns_i]}(x)N_{[ns_j]}(x).
\end{align}
We now can decompose the sum into four parts:
\begin{multline}
  n^{-2+\frac{1}{\alpha}}\sum_{i,j=1}^k\theta_{ij}\sum_{x\in\Z}N_{[ns_i]}(x)N_{[ns_j]}(x)\\
=V(\tau,M,n)+U(\tau,M,n)\displaybreak[0]\\
+\sum_{|l|\leq M}n^{-2+\frac{1}{\alpha}}\bigg(\sum_{a(l,n)\leq x<a(l+1,n)}\sum_{i,j=1}^k\theta_{ij}N_{[ns_i]}(x)N_{[ns_j]}(x)-\frac{n^2Q(l,n)}{[a(l+1,n)-a(l,n)]}\bigg)\displaybreak[0]\\
+\sum_{|l|\leq M}\left(n^{\frac{1}{\alpha}}[a(l+1,n)-a(l,n)]^{-1}-\frac{1}{\tau}\right)Q(l,n)\\
=V(\tau,M,n)+U(\tau,M,n)+I(\tau,M,n)+I\!\!I(\tau,M,n).
\end{multline}
We will treat the four summands separately. First note by Lemma 6 of Kesten and Spitzer \cite{kest} and the continuous mapping theorem, we have for $n\rightarrow\infty$ the convergence in distribution
\begin{equation}\label{line6}
 V(\tau,M,n)\Rightarrow \tau^{-1}\sum_{i,j=1}^k\theta_{ij}\sum_{|l|\leq M}\int_{\tau l}^{\tau (l+1)}L_{s_i}(x)dx\int_{\tau l}^{\tau (l+1)}L_{s_j}(x)dx=:V(\tau, M).
\end{equation}
For the summand $I(\tau,M,n)$, we introduce the mean occupation time of an interval $[a(l,n),a(l+1,n)]$:
\begin{equation}
 \bar{N}_{s_i,l}:=\frac{1}{[a(l+1,n)-a(l,n)]}\sum_{a(l,n)\leq x<a(l+1,n)}N_{[ns_i]}(x).
\end{equation}
Now we can rewrite $I(\tau,M,n)$ and apply the triangle inequality.
\begin{multline}
 \left|I(\tau,M,n)\right|\\
=\sum_{|l|\leq M}\sum_{a(l,n)\leq x<a(l+1,n)}n^{-2+\frac{1}{\alpha}}\left(\sum_{i,j=1}^k\theta_{ij}\left(N_{[ns_i]}(x)N_{[ns_j]}(x)-\bar{N}_{s_i,l}\bar{N}_{s_j,l}\right)\right)\displaybreak[0]\\
\leq \sum_{|l|\leq M}\sum_{a(l,n)\leq x<a(l+1,n)}n^{-2+\frac{1}{\alpha}}\left(\sum_{i,j=1}^k\theta_{ij}\left|N_{[ns_i]}(x)-\bar{N}_{s_i,l}\right|N_{[ns_j]}(x)\right)\displaybreak[0]\\
+\sum_{|l|\leq M}\sum_{a(l,n)\leq x<a(l+1,n)}n^{-2+\frac{1}{\alpha}}\left(\sum_{i,j=1}^k\theta_{ij}\bar{N}_{s_i,l}\left|N_{[ns_j]}(x)-\bar{N}_{s_j,l}\right|\right)\displaybreak[0]\\
\leq \theta^\star\sum_{|l|\leq M}\sum_{a(l,n)\leq x<a(l+1,n)}n^{-2+\frac{1}{\alpha}}\sum_{i,j=1}^k\left|N_{[ns_i]}(x)-\bar{N}_{s_i,l}\right|N_{[ns_j]}(x)\displaybreak[0]\\
+\theta^\star\sum_{|l|\leq M}\sum_{a(l,n)\leq x<a(l+1,n)}n^{-2+\frac{1}{\alpha}}\sum_{i,j=1}^k\bar{N}_{s_i,l}\left|N_{[ns_j]}(x)-\bar{N}_{s_j,l}\right|\displaybreak[0]\\
=:A_n+B_n
\end{multline}
with $\theta^\star:=\max\big\{|\theta_{i,j}|\big|\ 1\leq i,j\leq k\big\}$. By Lemma 1 of Kesten and Spitzer \cite{kest}, we have that
\begin{equation}
 E\left(N^2_{[ns_i]}(x)\right)\leq Cn^{2-\frac{2}{\alpha}},
\end{equation}
and by Lemma 3 of \cite{kest} in combination with their formula (2.26)
\begin{equation}
  E\left(N_{[ns_i]}(x)-N_{[ns_i]}(y)\right)^2\leq Cn^{1-\frac{1}{\alpha}}|x-y|^{\alpha-1}.
\end{equation}
Keep in mind that $a(l+1,n)-a(l.n)\leq C\tau n^{\frac{1}{\alpha}}$. Let $\|\cdot\|_2:=\sqrt{E[(\cdot)^2]}$ denote the $L_2$-norm. By the Cauchy-Schwarz inequality and the definition of $\bar{N}_{s_i,l}$, we obtain
\begin{multline}
 E\left|A_n\right|\leq \theta^\star\sum_{|l|\leq M}\sum_{a(l,n)\leq x<a(l+1,n)}n^{-2+\frac{1}{\alpha}}\sum_{i,j=1}^k\left\|N_{[ns_i]}(x)-\bar{N}_{s_i,l}\right\|_2\left\|N_{[ns_j]}(x)\right\|_2\displaybreak[0]\\
\leq  \theta^\star\sum_{|l|\leq M}\sum_{i,j=1}^k\sum_{a(l,n)\leq x,y<a(l+1,n)}\frac{n^{-2+\frac{1}{\alpha}}\left\|N_{[ns_j]}(x)\right\|_2}{[a(l+1,n)-a(l,n)]}\left\|N_{[ns_i]}(x)-N_{[ns_i]}(y)\right\|_2\displaybreak[0]\\
\leq C\theta^\star(2M+1)k^2\sum_{a(l,n)\leq x,y<a(l+1,n)}\frac{n^{-2+\frac{1}{\alpha}}\sqrt{n^{2-\frac{2}{\alpha}}}}{\tau n^{\frac{1}{\alpha}}}\sqrt{n^{1-\frac{1}{\alpha}}n^{\frac{1}{\alpha}(\alpha-1)}\tau^{\alpha-1}}\displaybreak[0]\\
=CM\sum_{a(l,n)\leq x,y<a(l+1,n)}n^{-\frac{2}{\alpha}}\tau^{-\frac{1}{2}-\frac{1}{2\alpha}}\leq CM\tau^{\frac{3}{2}-\frac{1}{2\alpha}}.
\end{multline}
With the same arguments and using the fact that
\begin{equation}
 \left\|\bar{N}_{s_i,l}\right\|_2\leq\frac{1}{[a(l+1,n)-a(l,n)]}\sum_{a(l,n)\leq x<a(l+1,n)}\left\|N_{[ns_i]}(x)\right\|_2,
\end{equation}
it follows that $E\left|B_n\right|\leq CM\tau^{\frac{3}{2}-\frac{1}{2\alpha}}$ and
\begin{equation}\label{line7}
E\left[I(\tau,M,n)\right]\leq CM\tau^{\frac{3}{2}-\frac{1}{2\alpha}}.
\end{equation}
For the next summand $I\!\!I(\tau,M,n)$, note that $Q(l,n)$ converges in distribution to $\sum_{i,j=1}^k\int_{\tau l}^{\tau(l+1)}L_{s_i}(x)dx\int_{\tau l}^{\tau(l+1)}L_{s_j}(x)dx$. Furthermore, $n^{\frac{1}{\alpha}}[a(l+1,n)-a(l,n)]^{-1}-1/\tau\rightarrow 0$ as $n\rightarrow\infty$ and consequently
\begin{equation}\label{line8}
 I\!\!I(\tau,M,n)=\sum_{|l|\leq M}\left(n^{\frac{1}{\alpha}}[a(l+1,n)-a(l,n)]^{-1}-\frac{1}{\tau}\right)Q(l,n)\xrightarrow{n\rightarrow\infty}0
\end{equation}
in probability. For the last summand, we have
\begin{equation}\label{line9}
 P\left(U(\tau,M,n)\neq 0\right)\leq P\left(N_n(x)>0 \ \text{for an} \ x \ \text{with}  |x|>M\tau n^{\frac{1}{\alpha}}\right)\leq \epsilon(M\tau),
\end{equation}
where $\epsilon(z)\rightarrow0$ as $z\rightarrow\infty$, see Lemma 1 of Kesten and Spitzer \cite{kest}. Note that the local time $L$ has almost surely a compact support, since the paths of the process $(S_s^\star)_{s\in[0,1]}$ are almost surely bounded, so we have for $V(\tau, M)$ defined in (\ref{line6}) the following limit
\begin{equation}\label{line10}
 V(\tau):=\lim_{M\rightarrow\infty}V(\tau, M)=\tau^{-1}\sum_{i,j=1}^k\theta_{ij}\sum_{l\in\Z}\int_{\tau l}^{\tau (l+1)}L_{s_i}(x)dx\int_{\tau l}^{\tau (l+1)}L_{s_j}(x)dx
\end{equation}
almost surely. By the almost sure continuity of the local time $L$ additionally
\begin{multline}\label{line11}
 \lim_{\tau\rightarrow0}V(\tau)=\lim_{\tau\rightarrow0}\sum_{i,j=1}^k\theta_{ij}\sum_{l\in\Z}\int_{\tau l}^{\tau (l+1)}L_{s_i}(x)\left(\tau^{-1}\int_{\tau l}^{\tau (l+1)}L_{s_j}(y)dy\right)dx\\
=\sum_{i,j=1}^k\theta_{ij}\int L_{s_i}(x)L_{s_j}(x)dx=:V.
\end{multline}
Finally, we combine the convergence of the different parts. Let $d(X,Y)$ denote the Prokhorov distance of the distributions of $X$ and $Y$ (so convergence with respect to $d$ is equivalent to weak convergence and $P\left(|X-Y|\geq\epsilon\right)\leq \epsilon$ implies $d(X,Y)\leq\epsilon$). For any $\epsilon>0$, choose $M,\tau>0$ in a way such that $M\tau$ is big enough and $\tau$, $M\tau^{\frac{3}{2}-\frac{1}{2\alpha}}$ are small enough to guarantee the following: $P\left(|V(\tau)-V|\geq\epsilon/6\right)\leq \epsilon/6$ by formula (\ref{line11}), $P\left(|V(\tau,M)-V(\tau)|\geq\epsilon/6\right)\leq \epsilon/6$ by formula (\ref{line10}) and $E\left[I(\tau,M,n)\right]\leq \frac{\epsilon^2}{36}$ by formula (\ref{line7}). Now we can choose $n_0\in\N$ with the help of (\ref{line6}) and (\ref{line8}), such that for all $n\geq n_0$ we have $d\left(V(\tau,M,n),V(\tau,M)\right)\leq \epsilon/6$ and $P\left(|I\!\!I(\tau,M,n)|\geq\epsilon/6\right)\leq \epsilon/6$ and arrive with the help of the triangle inequality at
\begin{multline}
d(V(\tau,M,n)+U(\tau,M,n)+I(\tau,M,n)+I\!\!I(\tau,M,n),V)\displaybreak[0]\\
\leq d(n^{-2+\frac{1}{\alpha}}\sum_{i,j=1}^n\theta_{ij}\sum_{x\in\Z}N_{[ns_i]}(x)N_{[ns_j]}(x),V(\tau,M,n)+U(\tau,M,n)+I(\tau,M,n))\displaybreak[0]\\
+d(V(\tau,M,n)+U(\tau,M,n)+I(\tau,M,n),V(\tau,M,n)+U(\tau,M,n))\\
+d(V(\tau,M,n)+U(\tau,M,n),V(\tau,M,n))+d(V(\tau,M,n),V(\tau,M))\\
+d(V(\tau,M),V(\tau))+d(V(\tau),V)\leq\epsilon.
\end{multline}

\end{proof}

\section{Proof of the Main Results}

\begin{proof}[Proof of Theorem \ref{theo1}] We will first prove the convergence of the finite dimensional distributions, tightness will be established later. We will make use of the Cram\'er-Wold theorem and show that for $\theta_1,\ldots,\theta_k\in\R$, $s_1,\ldots,s_k\in[0,1]$, $t_1,\ldots,t_k\in[0,1]$, we have as $n\rightarrow\infty$ the weak convergence
\begin{multline}
n^{-1+\frac{1}{2\alpha}}\sum_{j=1}^k\theta_j\sum_{i=1}^{[ns_j]}\left(\mathds{1}_{\{Y_i\leq t_j\}}-t_j\right)=n^{-1+\frac{1}{2\alpha}}\sum_{j=1}^k\theta_j\sum_{x\in\Z}N_{[ns_j]}(x)\zeta_j(x)\\
\Rightarrow \sum_{j=1}^k\theta_j\int L_{s_j}(x)dK(x,t_j),
\end{multline}
with $\zeta_j(x)=\mathds{1}_{\{\xi_x\leq t_j\}}-t_j$. For this, we will study the characteristic function and apply L\'evy's continuity theorem:
\begin{multline}
\varphi_n(\lambda):=E\left(\exp\bigg(i\lambda n^{-1+\frac{1}{2\alpha}}\sum_{j=1}^k\theta_j\sum_{x\in\Z}N_{[ns_j]}(x)\zeta_j(x)\bigg)\right)\\
=E\left(\prod_{x\in\Z}\exp\bigg(i\lambda n^{-1+\frac{1}{2\alpha}}\sum_{j=1}^k\theta_jN_{[ns_j]}(x)\zeta_j(x)\bigg)\right)\displaybreak[0]\\
=E\left(E\left(\prod_{x\in\Z}\exp\bigg(i\lambda n^{-1+\frac{1}{2\alpha}}\sum_{j=1}^k\theta_jN_{[ns_j]}(x)\zeta_j(x)\bigg)\Bigg|(X_n)_{n\in\N}\right)\right)\\
=E\left(\prod_{x\in\Z}E\left(\exp\bigg(i\lambda n^{-1+\frac{1}{2\alpha}}\sum_{j=1}^k\theta_jN_{[ns_j]}(x)\zeta_j(x)\bigg)\Bigg|(X_n)_{n\in\N}\right)\right),
\end{multline}
where we used the fact that the random variables $(\xi_x)_{x\in\Z}$ and thus also the random vectors $\big((\zeta_1(x),\ldots,\zeta_k(x))\big)_{x\in\Z}$ are independent and that inside conditional expectation, $(X_n)_{n\in\N}$ and thus $N_{[ns_1]}(x),\ldots,N_{[ns_k]}(x)$ are fixed. With $\varphi_{\zeta_1(0),\ldots,\zeta_k(0)}$, we denote the characteristic function of the random vector $(\zeta_1(0),\ldots,\zeta_k(0))$, so that
\begin{equation}
\varphi_n(\lambda)=E\left(\prod_{x\in\Z}\varphi_{\zeta_1(0),\ldots,\zeta_k(0)}\big(\lambda n^{-1+\frac{1}{2\alpha}}N_{[ns_1]}(x),\ldots, \lambda n^{-1+\frac{1}{2\alpha}}N_{[ns_k]}(x)\big)\right).
\end{equation}
The next step will be a Taylor expansion, so we have to gather some statements about the conditional moments. Keep in mind that $E\zeta_j(x)=0$ and thus
\begin{equation}
E\Big(\sum_{j=1}^k n^{-1+\frac{1}{2\alpha}}\theta_jN_{[ns_j]}(x)\zeta_j(x) \big|(X_n)_{n\in\N}\Big)=0.
\end{equation}
Furthermore
\begin{multline}
E\left(\bigg( n^{-1+\frac{1}{2\alpha}}\sum_{j=1}^k\theta_jN_{[ns_j]}(x)\zeta_j(x)\bigg)^2 \bigg|(X_n)_{n\in\N}\right)\displaybreak[0]\\
=\sum_{j,l=1}^kn^{-2+\frac{1}{\alpha}}\theta_j\theta_lN_{[ns_j]}(x)N_{[ns_l]}(x)\sigma_{jl}
\end{multline}
with $\sigma_{jl}:=\operatorname{Cov}(\zeta_j(x),\zeta_l(x))$. Finally, by Lemma 4 of Kesten and Spitzer \cite{kest}
\begin{equation}
\sup_{x\in\Z,s\in[0,1]}n^{-1+\frac{1}{2\alpha}}N_{[ns]}(x)\xrightarrow{n\rightarrow\infty}0
\end{equation}
in probability and by their Lemma 1 resepectively Lemma 2.1 of Guillotin-Plantard and Ladret \cite{guil}
\begin{align}
E\left(\sum_{x\in\Z}N_{[ns_j]}^2(x)\right)&\leq Cn^{2-\frac{1}{\alpha}},\\
E\left(\sum_{x\in\Z}N_n^3(x)\right)&\leq Cn^{3-\frac{2}{\alpha}}.
\end{align}
So we can conclude that
\begin{multline}\label{line5}
 \varphi_n(\lambda)\\
=E\Bigg(\prod_{x\in\Z}\bigg(1-\frac{\lambda^2}{2}\sum_{j,l=1}^kn^{-2+\frac{1}{\alpha}}\theta_j\theta_lN_{[ns_j]}(x)N_{[ns_l]}(x)\sigma_{jl}+O\big(n^{-3+\frac{3}{2\alpha}}N
 _n^3(x)\big)\bigg)\Bigg)\displaybreak[0]\\
\shoveleft=E\Bigg(\exp\bigg(\sum_{x\in\Z}\Big(-\frac{\lambda^2}{2}\sum_{j,l=1}^kn^{-2+\frac{1}{\alpha}}\theta_j\theta_lN_{[ns_j]}(x)N_{[ns_l]}(x)\sigma_{jl}\\
\shoveright {+o\big(n^{-2+\frac{1}{\alpha}}N_n^2(x)\big)+O\big(n^{-3+\frac{3}{2\alpha}}N _n^3(x)\big)\Big)\bigg)\Bigg)}\displaybreak[0]\\
\xrightarrow{n\rightarrow\infty}E\bigg(\exp\Big(-\frac{\lambda^2}{2}\sum_{j,l=1}^k\theta_j\theta_l\sigma_{jl}\int L_{s_j}(x)L_{s_l}(x)dx\Big)\bigg),
\end{multline}
where we used Lemma \ref{lem1} and the boundedness and continuity of the function $z\mapsto \exp(-z^2/2)$ to conclude that the expectation converges.

On the other hand, conditional on the L\'evy-process $S^\star$, the linear combination $\sum_{j=1}^k\theta_j\int L_{s_j}(x)dK(x,t_j)$ is Gaussian with variance
\begin{equation}
\sum_{j,l=1}^k\theta_j\theta_l\int L_{s_j}(x) L_{s_l}(x)\sigma_{jl}dx,
\end{equation}
as $\sigma_{jl}=\operatorname{Cov}(\zeta_j(x),\zeta_l(x))=\operatorname{Cov}(K(1,t_j),K(1,t_l))$ and the process $K$ is centered. This implies that
\begin{multline}
 E\left(\exp\Big(i\lambda \sum_{j=1}^k\theta_j\int L_{s_j}(x)dK(x,t_j)\Big)\right)\displaybreak[0]\\
= E\left(E\left(\exp\Big(i\lambda \sum_{j=1}^k\theta_j\int L_{s_j}(x)dK(x,t_j)\Big)\Big|S^\star\right)\right)\\
=E\left(\exp\Big(-\frac{1}{2}\lambda^2\sum_{j,l=1}^k\theta_j\theta_l\int L_{s_j}(x) L_{s_l}(x)\sigma_{jl}dx\Big)\right),
\end{multline}
and by (\ref{line5}) and L\'evy's continuity theorem the finite dimensional convergence follows. In order to prove tightness, we will establish a moment bound. First note that for all $j\in\Z$
\begin{align}
E\left(\mathds{1}_{\{\xi_j\leq t_1\}}-t_1-\mathds{1}_{\{\xi_j\leq t_2\}}+t_2\right)^2&\leq |t_1-t_2|,\\
E\left(\mathds{1}_{\{\xi_j\leq t_1\}}-t_1-\mathds{1}_{\{\xi_j\leq t_2\}}+t_2\right)^4&\leq |t_1-t_2|.
\end{align}
By Lemma 2.1 of Guillotin-Plantard and Ladret \cite{guil}, we have that
\begin{align}
E\left(\sum_{x\in\Z}N_n^2(x)\right)^2&\leq Cn^{4-\frac{2}{\alpha}},\\
E\left(\sum_{x\in\Z}N_n^4(x)\right)&\leq Cn^{4-\frac{3}{\alpha}}.
\end{align}
Now we obtain the following moment bound for all $n_1\leq n_2\leq n$ and $t_1,t_2\in[0,1]$ with $|t_1-t_2|\geq n^{-\frac{1}{\alpha}}$:
\begin{multline}\label{line4}
 E\left(\sum_{i=n_1+1}^{n_2}(\mathds{1}_{\{\xi_i\leq t_1\}}-t_1)-\sum_{i=n_1+1}^{n_2}(\mathds{1}_{\{\xi_i\leq t_2\}}-t_2)\right)^4\\
=E\left(E\bigg(\Big(\sum_{i=n_1+1}^{n_2}(\mathds{1}_{\{\xi_i\leq t_1\}}-t_1)-\sum_{i=n_1+1}^{n_2}(\mathds{1}_{\{\xi_i\leq t_2\}}-t_2)\Big)^4\bigg|(X_n)_{n\in\N}\bigg)\right)\\
\leq E\Bigg(\sum_{x\in\Z}N_{n_2-n_1}^4(x)|t_1-t_2|+\sum_{x\in\Z}\sum_{y\in\Z}N_{n_2-n_1}^2(x)N_{n_2-n_1}^2(y)|t_1-t_2|^2\Bigg)\\
\leq C\left((n_2-n_1)^{4-\frac{3}{\alpha}}|t_1-t_2|+C(n_2-n_1)^{4-\frac{2}{\alpha}}|t_1-t_2|^2\right)\\\leq C(n_2-n_1)^{4-\frac{2}{\alpha}}|t_1-t_2|^2.
\end{multline}
If $|t_1-t_2|\leq 2n^{-\frac{1}{\alpha}}$, we have by monotonicity that for any $t\in(t_1,t_2)$
\begin{multline}\label{line2}
 \left|\sum_{i=n_1+1}^{n_2}(\mathds{1}_{\{\xi_i\leq t\}}-t)-\sum_{i=n_1+1}^{n_2}(\mathds{1}_{\{\xi_i\leq t_1\}}-t_1)\right|\\
\leq \left|\sum_{i=n_1+1}^{n_2}\mathds{1}_{\{\xi_i\leq t\}}-\sum_{i=n_1+1}^{n_2}\mathds{1}_{\{\xi_i\leq t_1\}}\right|+(n_2-n_1)|t-t_1|\displaybreak[0]\\
\leq \left|\sum_{i=n_1+1}^{n_2}\mathds{1}_{\{\xi_i\leq t_2\}}-\sum_{i=n_1+1}^{n_2}\mathds{1}_{\{\xi_i\leq t_1\}}\right|+(n_2-n_1)|t_2-t_1|\displaybreak[0]\\
\leq \left|\sum_{i=n_1+1}^{n_2}(\mathds{1}_{\{\xi_i\leq t_2\}}-t_2)-\sum_{i=n_1+1}^{n_2}(\mathds{1}_{\{\xi_i\leq t_1\}}-t_1)\right|+2(n_2-n_1)|t_2-t_1|\\
\leq \left|\sum_{i=n_1+1}^{n_2}(\mathds{1}_{\{\xi_i\leq t_2\}}-t_2)-\sum_{i=n_1+1}^{n_2}(\mathds{1}_{\{\xi_i\leq t_1\}}-t_1)\right|+4n^{1-\frac{1}{\alpha}}.
\end{multline}
Following Bickel and Wichura \cite{bick}, we introduce for a two-parameter stochastic process $(V(s,t))_{s,t\in[0,1]}$ the notation
\begin{multline}\label{line3}
 w''_{\delta}(V)=\max\Big\{\sup_{\substack{0\leq t_1\leq t\leq t_2\leq 1\\t_2-t_1\leq \delta}}\min\left\{\|V(\cdot,t_2)-V(\cdot,t)\|_\infty,\|V(\cdot,t)-V(\cdot,t_1)\|_\infty\right\},\\
\sup_{\substack{0\leq s_1\leq s\leq s_2\leq 1\\s_2-s_1\leq\delta}}\min\left\{\|V(s_2,\cdot)-V(s,\cdot)\|_\infty,\|V(s,\cdot)-V(s_1,\cdot)\|_\infty\right\}\Big\},
\end{multline}
where $\|\cdot\|_\infty$ denotes the supremum norm. Now define the index set $D_n:=\left\{0,\frac{1}{n},\frac{2}{n}\ldots,1\right\}\times\left\{0,[n^{\frac{1}{\alpha}}]^{-1},2[n^{\frac{1}{\alpha}}]^{-1},\ldots,1\right\}$ and note that we have by (\ref{line2})
\begin{equation}\label{line1}
w''_{\delta}(n^{-1+\frac{1}{2\alpha}}W_n)\leq w''_{\delta}(n^{-1+\frac{1}{2\alpha}}W_{n|D_n})+4n^{-1+\frac{1}{2\alpha}}n^{1-\frac{1}{\alpha}},
\end{equation}
where $ w''_{\delta}(n^{-1+\frac{1}{2\alpha}}W_{n|D_n})$ is calculated by restricting all suprema in (\ref{line3}) to the set $D_n$. Now by Theorem 3 (and the remarks following their theorem) of Bickel and Wichura \cite{bick} together with (\ref{line4}), we can conclude that for any $\epsilon>0$
\begin{equation}
 P\left(\limsup_{n\rightarrow\infty}w''_{\delta}(n^{-1+\frac{1}{2\alpha}}W_{n|D_n})>\epsilon\right)\xrightarrow{\delta\rightarrow 0}0.
\end{equation}
It follows by (\ref{line1}), that
\begin{equation}
 P\left(\lim_{n\rightarrow\infty}w''_{\delta}(n^{-1+\frac{1}{2\alpha}}W_{n})>\epsilon\right)\xrightarrow{\delta\rightarrow 0}0.
\end{equation}
and thus the process is tight by Corollary 1 of \cite{bick}.
\end{proof}

\begin{proof}[Proof of Proposition \ref{prop2}] We will use Proposition \ref{prop1}, so we have to establish a moment inequality. Let be $m\in\N$ even, $s,s',t\in[0,1]$ with $s\leq s'$. Note that conditional on the process $S^{\star}$, the process $W$ is given by an It{\={o}} integral, and so it is Gaussian and we can apply the It{\={o}} isometry. Furthermore, note that the difference of local times $L_{s'}-L_s$ has the same distribution as $L_{s'-s}$ shifted by $S^\star(s)$. We obtain
\begin{multline}\label{line12}
E\left(W(s',t)-W(s,t)\right)^m=E\Big(\int(L_{s'}(x)-L_s(x))dK(x,t)\Big)^m\\
=E\left(E\bigg(\Big(\int (L_{s'}(x)-L_s(x))dK(x,t)\Big)^m\bigg|S^\star\bigg)\right)\displaybreak[0]\\
=E\left(E\bigg(\Big(\int L_{s'-s}(x)dK(x,t)\Big)^m\bigg|S^\star\bigg)\right)\\
=E\left(M(m,t)\Big(\int L^2_{s'-s}(x)dx\Big)^{\frac{m}{2}}\right),
\end{multline}
where $M(m,t)$ is the $m$-th moment of $K(1,t)$ and thus $M(m,t)\leq M_m$ for the $m$-th moment $M_m$ of a standard normal random variable. Now we gather some facts about local time. Obviously
\begin{equation}
 \int L_s(x)dx=s.
\end{equation}
By Theorem 1 of Davis \cite{davi}, we have that for $L^\star_s:=\sup_{x\in\R}L_s(x)$
\begin{equation}
 EL_s^{\star p}\leq C_ps^{p\frac{\alpha-1}{\alpha}}
\end{equation}
for a constant $C_p$ (we use the form of the inequality as stated by Lacey \cite{lace}, as there seems to be a misprint in \cite{davi}). Now we can proceed with the right side of (\ref{line12}):
\begin{multline}\label{line13}
 E\left(M(m,t)\Big(\int L^2_{s'-s}(x)dx\Big)^{\frac{m}{2}}\right)\leq M_mE\left(\left(\int L_{s'-s}(x)dx\right)^{\frac{m}{2}}\left(L^\star_{s'-s}\right)^{\frac{m}{2}}\right)\\
=M_m(s'-s)^{\frac{m}{2}}E\left(L^\star_{s'-s}\right)^{\frac{m}{2}}\leq M_mC_{m/2}(s'-s)^{\frac{m}{2}}(s'-s)^{\frac{m}{2}\frac{\alpha-1}{\alpha}}\\
\leq M_mC_{m/2}(s'-s)^{m(1-\frac{1}{2\alpha})}.
\end{multline}
Now let be $t,t'\in[0,1]$ with $t\leq t'$. Note that the process $(K(x,t')-K(x,t))_{t\in\R}$ is a (two-sided) Brownian motion with variance $\var(K(1,t')-K(1,t))\leq t'-t$. Now we proceed as above by conditioning on $S^\star$ and applying the It{\={o}}-isometry:
\begin{multline}\label{line14}
E\left(W(s,t')-W(s,t)\right)^m=E\left(\int L_s(x)d(K(x,t')-K(x,t))\right)^m\displaybreak[0]\\
=E\left(E\bigg(\Big(\int L_s(x)d(K(x,t')-K(x,t))\Big)^m\bigg|S^\star\bigg)\right)\displaybreak[0]\\
\leq E\left(M_m\left((t'-t)\int L_s^2(x)dx\right)^{\frac{m}{2}}\right)\displaybreak[0]\\
\leq M_m(t'-t)^{\frac{m}{2}}E\left(L^{\star}(s)\int L_s(x)dx\right)^{\frac{m}{2}}\displaybreak[0]\\
\leq M_m(t'-t)^{\frac{m}{2}}EL^{\star\frac{m}{2}}(1)\leq M(m,t)C_{m/2}(t'-t)^{\frac{m}{2}}.
\end{multline}
Combining (\ref{line13}) and (\ref{line14}), we arrive at
\begin{multline}
E\left(W(s',t')-W(s,t)\right)^m\displaybreak[0]\\
\leq 2^{m-1}\left(E\left(W(s',t')-W(s,t')\right)^m+E\left(W(s,t')-W(s,t)\right)^m\right)\displaybreak[0]\\
\leq 2^{m-1}M_mC_{m/2}(s'-s)^{m(1-\frac{1}{2\alpha})}+2^{m-1}M_mC_{m/2}(t'-t)^{\frac{m}{2}}.
\end{multline}
Now for any $\gamma<1-\frac{1}{2\alpha}$, $\gamma'<\frac{1}{2}$, we can choose $m$ large enough such that
\begin{equation}
 \gamma<\frac{{m(1-\frac{1}{2\alpha})}-2}{m}, \ \ \ \gamma'<\frac{\frac{m}{2}-2}{m},
\end{equation}
and the statement of this proposition follows from Proposition \ref{prop1}.

\end{proof}

\section*{Acknowledgement}
The research was supported by the DFG Sonderforschungsbereich 823 (Collaborative Research Center) {\em Statistik nichtlinearer dynamischer Prozesse}. I would like to thank Brice Franke and Norman Lambot for their careful reading of the article and their remarks. I am very grateful to the anonymous referee for his comments, which have helped to improve and clarify this manuscript.

\end{document}